\documentclass{amsart}
\usepackage{hyperref}
\usepackage{graphicx}

% Number Sets

\newcommand{\Z}{\mathbf{Z}}
\newcommand{\N}{\mathbf{N}}

% Caligraphic

\newcommand{\cF}{\mathcal{F}}

\newcommand{\cI}{\mathcal{I}}

% General Math Notations

\newcommand{\eps}{\epsilon}

% Probability Notations
\DeclareMathOperator{\E}{\mathbf{E}}
\let\P\relax % Redefining \P is discouraged by AMS
\DeclareMathOperator{\P}{\mathbf{P}}

% Probability Distributions

% Text and indexing

%\newcommand{\defn}[1]{\textbf{#1}\index{#1}}
%\newcommand{\defni}[2]{\textbf{#1}\index{#2}}
%\newcommand{\ind}[1]{{#1}\index{#1}}

% Theorem-like environments
\theoremstyle{plain}
\newtheorem{theorem}{Theorem}%[section]

\newtheorem{lemma}[theorem]{Lemma}

\theoremstyle{definition}

\theoremstyle{remark}
\newtheorem{remark}[theorem]{Remark}

% Equation Numbering
%\numberwithin{equation}{section}

% Commands specific to this article

\begin{document}
\title{A Mass Transport Proof of the Ergodic Theorem}
\author{Calvin Wooyoung Chin}
\date{\today}
\begin{abstract}
It is known that a gambler repeating a game with positive expected value has a positive probability to never go broke.
We use the mass transport method to prove the generalization of this fact where the gains from the bets form a stationary, rather than an i.i.d., sequence.
Birkhoff's ergodic theorem follows from this by a standard argument.
\end{abstract}
\maketitle

Let $X_1,X_2,\ldots$ be (real-valued) random variables, and write
\[ S_n := X_1+\cdots+X_n \qquad \text{for all $n\in\N$.} \]
In this note, we write $\N := \{1,2,3,\ldots\}$.

It is known \cite{gambler} that a gambler that repeats a game with positive expected value has a positive chance to never go broke.
More precisely, if $X_1,X_2,\ldots$ are i.i.d.\ and $\E X_1 > 0$, then
\[
\P(S_n > 0 \text{ for all } n\in\N) > 0.
\]
In this note, we show that this holds whenever $X_1,X_2,\ldots$ are stationary and $\E X_1 > 0$ (Lemma~\ref{lem:max_ergo_gamb}) using the ``mass transport" method.
Although no knowledge of the method will be needed, one might want to take a look at the short paper \cite{Hag99} to get a flavor of mass transport.

Birkhoff's ergodic theorem (Theorem~\ref{thm:birk_ergo}) then follows by a standard argument.
We include the derivation of the ergodic theorem for completeness.

\begin{lemma} \label{lem:max_ergo_gamb}
If the sequence $X_1,X_2,\ldots$ is stationary and $\E X_1 > 0$, then
\[ \P(S_n > 0 \text{ for all } n\in\N) > 0. \]
\end{lemma}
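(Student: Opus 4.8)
The plan is to argue by contradiction, extending the sequence to a two-sided stationary family and applying the mass transport principle to a transport that pairs upward movements of the induced random walk with later downward movements.

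First I would extend $X_1,X_2,\ldots$ to a two-sided stationary sequence $(X_n)_{n\in\Z}$ (via the Kolmogorov extension theorem) and set $W_0 := 0$ and $W_n - W_{n-1} := X_n$, so that $(W_n)_{n\in\Z}$ is a walk with stationary increments. Writing $A_i := \{W_j > W_i \text{ for all } j>i\}$, stationarity gives $\P(A_i) = \P(A_0) = \P(S_n > 0 \text{ for all } n)$ for every $i$. Suppose toward a contradiction that this probability is $0$. Then $\P(\bigcup_{i\in\Z} A_i) = 0$, so almost surely every site $i$ admits some $j>i$ with $W_j \le W_i$; call this property (*).

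Next I would set up a transport at the level of height crossings. Since each step is a single jump, for $h\in\R$ the step $i\to i+1$ \emph{upcrosses} (resp.\ \emph{downcrosses}) level $h$ exactly when $W_i < h < W_{i+1}$ (resp.\ $W_{i+1} < h < W_i$); the total up- and down-mass leaving site $i$ are $U_i^+$ and $U_i^-$, where $U_i := W_{i+1}-W_i$. The central observation is that, for fixed $h$, the successive crossings of $h$ alternate in sign, since after an upcrossing the walk sits above $h$ and cannot upcross again before downcrossing. Moreover (*) guarantees that no upcrossing is the final crossing: if step $i\to i+1$ upcrosses $h$ then $W_{i+1} > h > W_i$, and applying (*) at $i$ yields a later time $j$ with $W_j \le W_i < h$, forcing a downcrossing of $h$ after time $i$. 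Hence at every level each upcrossing can be matched injectively to the immediately following downcrossing, and this matching exhausts the up-mass. Integrating over $h$ defines a nonnegative transport $f(i,j)$ (the measure of levels whose upcrossing at step $i$ is matched to a downcrossing at step $j$) with $\sum_j f(i,j) = U_i^+$ and $\sum_i f(i,j) \le U_j^-$.

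Finally I would apply the mass transport principle. The rule $f$ depends on the increments in a shift-equivariant way (a time shift merely re-centers the walk, leaving the matched set of levels unchanged up to a vertical translation), so $\E\sum_j f(0,j) = \E\sum_i f(i,0)$. This yields $\E U_0^+ \le \E U_0^-$, that is $\E X_1 = \E U_0^+ - \E U_0^- \le 0$, contradicting $\E X_1 > 0$; here the hypothesis forces $\E U_0^- < \infty$, so the subtraction is legitimate. I expect the main obstacle to be the combinatorial heart of the argument, namely verifying carefully that (*) makes the level-by-level matching both injective and exhaustive on the upcrossings (the alternation and no-final-upcrossing claims), together with checking the shift-covariance needed to invoke the principle; the extension and integrability points should be routine.
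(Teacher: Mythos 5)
Your proposal is correct and is essentially the paper's own argument: the paper's record-based transport $M(n,m)$ in \eqref{eq:mass_trans_defn} is exactly your level-crossing transport, since $M(n,m)$ equals the Lebesgue measure of levels $h\in(S_n,S_{n+1})$ whose upcrossing at step $n$ has its first subsequent passage below $h$ at time $m$ (your $f(i,j)$ up to a shift of one in the receiving index). The contradiction structure is also identical --- sent mass $=\E[X_1;X_1>0]$ via the a.s.\ return assumption, received mass $\le \E[-X_0;X_0\le 0]$ via injectivity of the matching --- with your per-level alternation argument replacing the paper's explicit case analysis of $M(m,0)$.
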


\begin{proof}
By Kolmogorov's extension theorem, we may assume without loss of generality the existence of a doubly-infinite stationary sequence
\[ \dots,X_{-2},X_{-1},X_0,X_1,X_2,\dots. \]
Define $S_n$ for every $n\in\Z$ so that $S_0 = 0$ and $S_n - S_{n-1} = X_n$ for all $n\in\Z$.

Let $n \in \Z$.
We call $m \in \Z$ a \emph{record after} $n$ if
\[ m > n \qquad \text{and} \qquad S_m = \min\{S_{n+1},S_{n+2},\ldots,S_m\}. \]
Notice that $n+1$ is always a record after $n$.
We now introduce the ``mass" $M(n,m)$ that $n$ sends to each $m\in\Z$ as follows.
If $X_{n+1} \le 0$, then $M(n,m) := 0$ for all $m\in\Z$.
If $X_{n+1} > 0$, then let $n+1 = n_0 < n_1 < n_2 < \cdots$ (this might be finite in length) be the enumeration of the records after $n$, and let
\begin{equation} \label{eq:mass_trans_defn}
M(n,n_j) := \max\{S_{n_{j-1}}, S_n\} - \max\{S_{n_j}, S_n\} \qquad \text{ for each $j\in\N$.}
\end{equation}
For all $m \in \Z \setminus \{n_1,n_2,\ldots\}$, let $M(n,m) := 0$.
See Figure~\ref{fig:transport} for a visualization of the mass transport.

\begin{figure}
\includegraphics{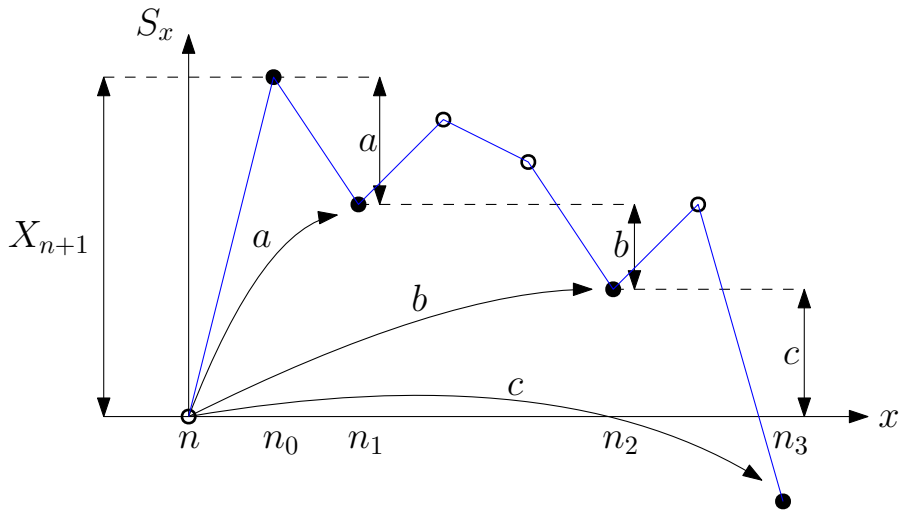}
\caption{
The records after $n$ (filled dots) and the mass that $n$ sends to them.
In this particular case, $n$ sends out $a+b+c$ amount of mass, which equals $X_{n+1}$.}
\label{fig:transport}
\end{figure}

Since $\dots,X_{-1},X_0,X_1,\dots$ is stationary and $M(n,m) \ge 0$ for all $n,m \in \Z$, we have
\begin{equation} \label{eq:erg_mt}
\E\biggl[ \sum_{n=1}^\infty M(0,n) \biggr]
= \sum_{n=1}^\infty \E[M(0,n)] = \sum_{n=1}^\infty \E[M(-n,0)]
= \E\biggl[ \sum_{n=1}^\infty M(-n,0) \biggr].
\end{equation}
This simple equality is at the heart of the mass transport method.
Suppose that $\P(S_n \le 0 \text{ for some }n\in\N) = 1$.
We will derive a contradiction by evaluating each side of \eqref{eq:erg_mt}.

Assume that $X_1 > 0$, and let $\tau := \inf\{n\in\N : S_n \le 0\}$.
Notice that $\tau < \infty$ a.s.
Let $1 = n_0 < n_1 < \cdots < n_t = \tau$ be the records after $0$ up to $\tau$.
Then we have
\[
\sum_{n=1}^\infty M(0,n) = \sum_{j=1}^t M(0,n_j)
= \sum_{j=1}^{t-1} (S_{n_{j-1}}-S_{n_j}) + (S_{n_{t-1}} - S_0)
= X_1.
\]
Thus, the left side of \eqref{eq:erg_mt} equals $\E[X_1;X_1>0]$.

Let us examine the sum on the right side of \eqref{eq:erg_mt}.
See Figure~\ref{fig:received} for a visualization.
If $X_0 > 0$, then $0$ is not a record after any number below $-1$, and thus the sum is $0$.
Assume $X_0 \le 0$, and let $-1 = m_0 > m_1 > m_2 > \cdots$ (which might be finite in length) be the enumeration of the numbers $m < 0$ such that
\[
S_m < \min\{S_{m+1},\ldots,S_{-1}\}.
\]

Let $m < 0$ and let us compute $M(m,0)$.
First assume that $m=m_j$ for some $j\in\N$, and consider the cases (a) $S_{m_{j-1}} < 0$ and (b) $S_{m_{j-1}} \ge 0$.
If (a) is the case, then $0$ is not a record after $m_j$, and thus $M(m,0)=0$.
Assume that (b) is the case.
By the definition of $m_0,m_1,\dots$, we have $S_n \ge S_{m_{j-1}}$ for all $n=m_j+1,\ldots,m_{j-1}$. This implies that $m_{j-1}$ is a record after $m_j$.
Since
\[ 0 \le S_{m_{j-1}} < \min\{S_{m_{j-1}+1},\ldots,S_{-1}\}, \]
we see that $m_{j-1}$ and $0$ are consecutive records after $m_j$.
Thus,
\[ M(m,0) = S_{m_{j-1}} - \max\{0,S_{m_j}\}. \]
Combining (a) and (b) yields
\[
M(m,0) = \max\{S_{m_{j-1}},0\} - \max\{S_{m_j},0\}.
\]

If $m \ne m_j$ for all $j\ge0$, and $k\ge0$ is the largest number such that $m<m_k$, then $S_m \ge S_{m_k}$.
Since $m_k$ is a record after $m$, the definition of $M$ tells us that $M(m,0) = 0$; the two maximums in \eqref{eq:mass_trans_defn} are both $S_m$ even if $0$ is a record after $m$.
We now know what $M(m,0)$ is for all $m < 0$, and this yields
\[
\begin{split}
\E\biggl[ \sum_{n=1}^\infty M(-n,0) \biggr]
&= \E\biggl[ \sum_{j\ge 1} \bigl(\max\{S_{m_{j-1}},0\} - \max\{S_{m_j},0\}\bigr); X_0\le 0 \biggr] \\
&\le \E[-X_0;X_0\le 0].
\end{split}
\]

\begin{figure}
\includegraphics{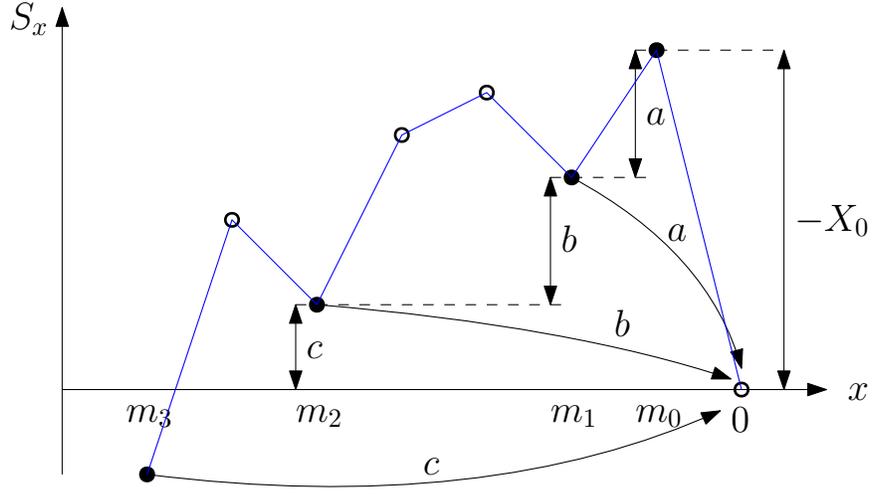}
\caption{
The mass $0$ receives when $X_0 \le 0$.
In this particular case, $0$ receives $a+b+c = -X_0$, but it might receive less if $\inf_{n<0} S_n > 0$.
}
\label{fig:received}
\end{figure}

The equation \eqref{eq:erg_mt} now gives
\[ \E[X_1;X_1>0] \le \E[-X_0;X_0\le 0], \]
which implies $\E X_1 \le 0$;
this is a contradiction.
\end{proof}

\begin{remark}
Our argument actually proves the maximal ergodic theorem \cite[Theorem~24.2]{BilPM}, which says that if $X_1,X_2,\ldots$ is a stationary sequence, then
\[
\E[X_1; S_n \le 0 \text{ for some }n\in\N] \le 0.
\]
Indeed, the left side of \eqref{eq:erg_mt} is bounded below by
\[
\E[X_1; X_1 > 0 \text{ and } S_n\le 0 \text{ for some } n\in\N],
\]
while the right side of \eqref{eq:erg_mt} is bounded above by $\E[-X_0;X_0\le 0]$.
Thus,
\[ \E[X_1; X_1 > 0 \text{ and } S_n\le 0 \text{ for some } n\in\N]
\le \E[-X_1;X_1\le 0], \]
and therefore $\E[X_1; S_n\le 0 \text{ for some } n\in\N] \le 0$.

There is a short proof of the maximal ergodic theorem which however lacks in intuition; see \cite[Theorem~24.2]{BilPM}, for example.
Our proof is an attempt to remedy this problem by utilizing the intuitive principle of mass transport.
\end{remark}

We now prove Birkhoff's ergodic theorem.
Let $(\Omega,\cF,\P)$ be the underlying probability space, and a measurable map $T \colon \Omega \to \Omega$ be \emph{measure-preserving} in the sense that $\P(T^{-1}A) = \P(A)$ for all $A \in \cF$.
An event $A$ is \emph{invariant} under $T$ if $T^{-1}A = A$, and we denote the $\sigma$-field of all invariant events by $\cI$.

\begin{theorem}[Birkhoff's ergodic theorem] \label{thm:birk_ergo}
Let $X_1$ be a random variable with finite mean, and write $X_n := X_1\circ T^{n-1}$ for $n=2,3,\dots$.
If $S_n := X_1+\cdots+X_n$ for all $n\in\N$, then
\[ S_n/n \to \E[X_1 \mid \cI] \qquad \text{a.s.} \]
\end{theorem}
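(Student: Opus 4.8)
The plan is to establish pointwise convergence and the identification of the limit in two separate stages, both driven by the maximal ergodic theorem recorded in the Remark. I would work directly with the averages $A_n := S_n/n$ and set $\cl{X} := \limsup_n A_n$ and $\underline{X} := \liminf_n A_n$. The first step is to check that $\cl{X}$ and $\underline{X}$ are $\cI$-measurable. Since $S_n \circ T = X_2 + \cdots + X_{n+1} = S_{n+1} - X_1$, we get $A_n \circ T = (S_{n+1} - X_1)/n$, and letting $n \to \infty$ gives $\cl{X} \circ T = \cl{X}$ and $\underline{X}\circ T = \underline{X}$. Hence $\{\cl{X} > c\}$ and $\{\underline{X} < c\}$ lie in $\cI$ for every $c$.

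The second step is to show $\P(D) = 0$ for the invariant event $D = D_{a,b} := \{\underline{X} < a\}\cap\{\cl{X} > b\}$, for each pair of rationals $a < b$; taking the union over all such pairs then gives $\P(\underline{X} < \cl{X}) = 0$, so $A_n$ converges a.s.\ to an invariant limit $L$, which is finite and integrable because $\E|A_n| \le \E|X_1|$ and Fatou's lemma. To bound $\P(D)$, I would apply the maximal ergodic theorem twice to sequences localized on $D$. The sequence $Y_k := (X_k - a)1_D$ is stationary because $1_D \circ T^{k-1} = 1_D$, and its partial sums are $\Sigma_n = 1_D(S_n - na)$; on $D$ we have $\liminf_n A_n < a$, so $\Sigma_n = S_n - na < 0$ for some $n$, whence the event $\{\Sigma_n \le 0 \text{ for some } n\}$ contains $D$. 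Since $Y_1$ vanishes off $D$, the theorem gives $\E[(X_1 - a)1_D] \le 0$, i.e.\ $\E[X_1; D] \le a\,\P(D)$. Running the same argument with $(b - X_k)1_D$ and $\limsup_n A_n > b$ on $D$ yields $\E[X_1; D] \ge b\,\P(D)$, so $b\,\P(D) \le a\,\P(D)$ forces $\P(D) = 0$.

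The third step identifies $L = \E[X_1 \mid \cI]$. For $A \in \cI$, measure-preservation together with $1_A \circ T^{k-1} = 1_A$ gives $\E[X_k; A] = \E[X_1; A]$ for each $k$, hence $\E[A_n; A] = \E[X_1; A]$ for all $n$. When $X_1$ is bounded, the bounded convergence theorem lets me pass $n \to \infty$ to obtain $\E[L; A] = \E[X_1; A]$ for all $A \in \cI$, which is exactly $L = \E[X_1 \mid \cI]$. For general integrable $X_1$, I would write $X_1 = X_1 1_{\{|X_1|\le M\}} + R_M$; the bounded case handles the first term, while by the same argument applied to $R_M$ the averages of $R_M$ converge a.s.\ to an invariant $L_M$ with $\E|L_M| \le \E|R_M| \to 0$ by Fatou. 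Since conditional expectation is an $L^1$-contraction, letting $M \to \infty$ identifies $L$ with $\E[X_1\mid\cI]$ in $L^1$, hence a.s. I expect this last passage to be the main obstacle: the averages $A_n$ are not dominated for unbounded $X_1$, so the limit cannot be identified by a direct application of dominated convergence and must instead be routed through the truncation and $L^1$-approximation above.
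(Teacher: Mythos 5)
Your proof is correct, but it takes a genuinely different route from the paper's. You run the classical textbook argument: show that $\limsup_n S_n/n$ and $\liminf_n S_n/n$ are invariant, kill each invariant event $D_{a,b}=\{\liminf_n S_n/n<a\}\cap\{\limsup_n S_n/n>b\}$ by two applications of the maximal ergodic theorem (the form recorded in the paper's Remark) to the localized stationary sequences $((X_k-a)1_{D_{a,b}})_k$ and $((b-X_k)1_{D_{a,b}})_k$, and then identify the limit from the constancy of $\E[S_n/n;A]$ for $A\in\cI$, handling unbounded $X_1$ by truncation and an $L^1$-contraction argument. All of these steps check out: the localized sequences are stationary since $1_{D_{a,b}}\circ T=1_{D_{a,b}}$, their partial sums are nonpositive somewhere almost surely, and the resulting inequalities $b\,\P(D_{a,b})\le\E[X_1;D_{a,b}]\le a\,\P(D_{a,b})$ force $\P(D_{a,b})=0$. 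The paper instead makes the reduction $\E[X_1\mid\cI]=0$ at the very start (legitimate because $\E[X_1\mid\cI]\circ T=\E[X_1\mid\cI]$) and then needs only one family of invariant events $A_\eps=\{\liminf_n S_n/n<-\eps\}$: applying the contrapositive of Lemma~\ref{lem:max_ergo_gamb} to the stationary sequence $((X_n+\eps)1_{A_\eps})_n$ gives $\E[(X_1+\eps)1_{A_\eps}]\le 0$, which reads $\eps\P(A_\eps)\le 0$ because the conditional expectation vanishes, so $\P(A_\eps)=0$; a sign flip handles the $\limsup$. The payoff of the paper's reduction is that the limit is identified automatically, so your entire third step---truncation, bounded convergence, $L^1$-approximation, which you rightly flag as the delicate part---disappears. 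What your route buys in exchange is that a.s.\ convergence of $S_n/n$ is established before anything is known about the limit, and that you invoke only the weak-form maximal ergodic inequality of the Remark rather than Lemma~\ref{lem:max_ergo_gamb} itself; since the paper proves both, your argument is self-contained within it.
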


\begin{proof}
Since $\E[X_1\mid\cI] \circ T = \E[X_1\mid\cI]$, we may assume that $\E[X_1\mid\cI] = 0$.
Let $\eps > 0$ and $A_\eps := \{\liminf_{n\to\infty} S_n/n < -\eps\}$.
On the event $A_\eps$, we have $S_n+n\eps < 0$ for some $n\in\N$.
Thus,
\begin{equation} \label{eq:ergo_prob_zero}
\P((S_n+n\eps)1_{A_\eps} > 0 \text{ for all }n\in\N) = 0.
\end{equation}
Since $A_\eps \in \cI$, we have
\[ ((X_1+\eps)1_{A_\eps}) \circ T^{n-1} = (X_n+\eps)1_{A_\eps} \qquad \text{for all $n\in\N$.} \]
As $((X_n+\eps)1_{A_\eps})_{n\in\N}$ is a stationary sequence, \eqref{eq:ergo_prob_zero} and Lemma~\ref{lem:max_ergo_gamb} imply
\[ \E[(X_1+\eps)1_{A_\eps}] \le 0. \]
Since $\E[X_1\mid\cI] = 0$, we have
\[ \E[(X_1+\eps)1_{A_\eps}] = \eps\P(A_\eps), \]
and thus $\P(A_\eps) = 0$.
As $\eps$ is arbitrary, we have $\liminf_{n\to\infty} S_n/n \ge 0$.
Applying this result to $-X_1$ in place of $X_1$ gives $\limsup_{n\to\infty} S_n/n \le 0$.
Therefore, we have $S_n/n \to 0$ a.s.
\end{proof}

% References
\bibliographystyle{alpha}
\bibliography{references}
\end{document}